\begin{document}
			 \bibliographystyle{plain}
			 
			 
			 \newtheorem{theorem}{Theorem}
			 \newtheorem{lemma}[theorem]{Lemma}
			  \newtheorem{proposition}[theorem]{Proposition}
			 \newtheorem{corollary}[theorem]{Corollary}
			 \newtheorem{conjecture}[theorem]{Conjecture}
			 \newtheorem*{definition}{Definition}
			  \newtheorem*{problem}{Problem}
			   \newtheorem{construction}{Construction}
			  \newtheorem{remark}{Remark}
	  \newtheorem{example}[theorem]{Example}
			
			  
		 \newcommand{\R}{\mathbb{R}}   
		 \newcommand{\T}{\mathbb{T}}
		 \newcommand{\Q}{\mathbb{Q}}
		 \newcommand{\Z}{\mathbb{Z}}
		\newcommand{\lb}{\left\lbrace}
		\newcommand{\rb}{\right\rbrace}
		\newcommand{\lp}{\left(}
		\newcommand{\rp}{\right)}
		\newcommand{\ar}{\right\rangle}
		\newcommand{\al}{\left\langle}
		\newcommand{\ra}{\rightarrow}
		\newcommand{\dint}{\displaystyle\int}
				\newcommand{\dsum}{\displaystyle\sum}
		\newcommand{\by}{\boldsymbol{y}}
		 \newcommand{\ds}{\displaystyle}			      
		\newcommand{\disc}{\mathrm{disc}} 
		 \newcommand{\length}{\mathrm{Length}}


\title[PE Cohomology \& Kesten's Theorem ]{Pattern Equivariant Cohomology \\ and Theorems of Kesten and Oren}
\author[Kelly ]{Mike~ Kelly}
\author[Sadun]{Lorenzo~Sadun}
\date{\today}
\subjclass[2010]{52C23; 37B50}  

\keywords{bounded displacement,  pattern equivariant
  cohomology,  cut-and-project pattern} 

\address{Department of Mathematics, Univerisity of Texas, Austin, Texas 78712 USA}
\email{mkelly@math.utexas.edu}
\email{sadun@math.utexas.edu}

\numberwithin{equation}{section}


\maketitle

\begin{abstract} 
In 1966 Harry Kesten settled the Erd\H os-Sz\"usz conjecture on the local discrepancy of irrational rotations. His proof made heavy use of continued fractions and Diophantine analysis. In this paper we give a purely 
topological proof Kesten's theorem (and Oren's generalization of it) using the pattern equivariant cohomology of aperiodic tiling spaces. 
\end{abstract}

\section{Introduction}
Let $\T=\R/\Z$ and for an irrational number $\xi\in\R$ define a map $T_{\xi}:\T\ra\T$ by
	\[
		T_{\xi}(x)=x+\xi\mod\Z
	\]
It is well known that 
for any interval $I\subset \T$, the function
	\[
		D(N)=D(N;I)=\# I\cap \lb T_{\xi}^{k}(x) : 0\leq k\leq N  \rb -N\length(I)
	\]
is $o(N)$ for each $x\in\T$. That is, $D(N)/N\ra 0$ as $N\ra \infty$ for each $x$. $D(N)$ is sometimes called the  {\it local discrepancy} or {\it error} function. As early as the 1920's, it was known (by Hecke and Ostrowski  \cite{Hecke,Ostrowski}) that if there exists an integer $k$ such that
	\begin{equation}\label{KestenCondition}
		\length(I) \equiv k\xi\mod \Z
	\end{equation}
 then $D(N)=O(1)$. That is, $D(N)$ is {\it bounded}. In the 1960's it was conjectured by  Erd\H os-Sz\"usz  \cite{Erdos} -- and proved by Kesten \cite{Kesten} -- that the converse holds. 
 
\begin{theorem}[Kesten] Let $I\subset \T$ be an interval, and $\xi\in\R$ be irrational. There exists a constant $C>0$ such that $|D(N)|<C$ if, and only if, (\ref{KestenCondition}) holds. 
\end{theorem}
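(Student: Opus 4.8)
The plan is to convert the boundedness of $D(N)$ into the vanishing of a class in the pattern equivariant cohomology of a tiling space, and then to see that this vanishing forces Kesten's arithmetic condition by an elementary bookkeeping argument, with no continued fractions anywhere. First I would record the identity $D(N)=\sum_{k=0}^{N}\bigl(\mathbf 1_I(T_\xi^{k}x)-\length(I)\bigr)+\length(I)$, so that $|D(N)|$ is bounded if and only if the Birkhoff sums of $f:=\mathbf 1_I-\length(I)$ along the orbit of $x$ are bounded; since the whole problem is invariant under translating $I$, I may assume $I=[0,\beta)$ with $\beta=\length(I)$. The Hecke--Ostrowski implication (if $\beta\equiv k\xi\bmod\Z$ then $D$ is bounded) is classical and I would simply cite it, or reproduce the one line observation that $\mathbf 1_{[0,\xi)}-\xi$ is a $T_\xi$-coboundary with bounded, if discontinuous, transfer function; the substance is the \emph{converse}.

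For the converse, assume $\beta\notin\Z\xi+\Z$. Let $\Omega\subset\{0,1\}^{\Z}$ be the minimal subshift obtained by coding $(\T,T_\xi)$ through the partition $\{[0,\beta),[\beta,1)\}$ --- equivalently, up to suspension, the hull of the two-colored tiling of $\R$ by unit tiles. The factor map $\pi\colon\Omega\to\T$ is almost one to one, injective away from the fibres over the two orbits $\mathcal O_0=\Z\xi\bmod\Z$ and $\mathcal O_\beta=\beta+\Z\xi\bmod\Z$, and the point of passing to $\Omega$ is that the discontinuous function $f$ lifts to the \emph{continuous}, indeed pattern equivariant, function $\tilde f(\omega)=\omega_0-\beta$ on $\Omega$, with $\tilde f$ evaluated along the coding of $x$ reproducing $f$ along the orbit of $x$. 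Boundedness of $D(N)$ then says the Birkhoff sums of $\tilde f$ are bounded at one point of $\Omega$; by minimality they are uniformly bounded on $\Omega$, and Gottschalk--Hedlund gives $\tilde f=\tilde g-\tilde g\circ\sigma$ for some $\tilde g\in C(\Omega)$. In the language of the paper this is exactly the statement that the class of $f$ in $H^1_{PE}(\Omega,\R)$ vanishes.

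Now I would extract the arithmetic from that coboundary relation. Pushing $\tilde g$ forward through $\pi$ yields a bounded regulated function $g\colon\T\to\R$ (one sided limits at every point), continuous off $\mathcal O_0\cup\mathcal O_\beta$, satisfying $f=g-g\circ T_\xi$ away from a countable set. Write $\alpha_n$ for the jump of $g$ at $n\xi\bmod\Z$. Because $\mathcal O_0$ is $T_\xi$-invariant and --- this is precisely where the hypothesis $\beta\notin\Z\xi+\Z$ is used --- disjoint from $\mathcal O_\beta$, the jump of $g-g\circ T_\xi$ at $n\xi$ equals $\alpha_n-\alpha_{n+1}$, while the jump of $f$ there equals $1$ when $n=0$ and $0$ otherwise. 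Hence $\alpha_n$ is constant for $n\le 0$, constant for $n\ge 1$, and the two constants differ by $1$; one of them is therefore a fixed nonzero number, so $g$ has infinitely many jumps of that one size along the dense set $\{n\xi\bmod\Z\}$, which no regulated function can have. This contradiction shows $D(N)$ is unbounded, and it is the combinatorial shadow of the fact that blowing up the orbit $\mathcal O_0$ contributes an extra generator to $H^1_{PE}(\Omega,\R)$ on which the class of $f$ is nonzero exactly when $\beta\notin\Z\xi+\Z$. The same bookkeeping gives Oren's theorem: for $E=\bigsqcup_{i=1}^{m}[\gamma_i,\gamma_i+\ell_i)$ one blows up the orbits of all $2m$ endpoints, and the jump relation along each orbit $\mathcal O$ telescopes to the obstruction $\#\{i:\gamma_i\in\mathcal O\}-\#\{i:\gamma_i+\ell_i\in\mathcal O\}$, whose vanishing for every $\mathcal O$ --- a matching of the left endpoints with the right endpoints orbit by orbit --- is Oren's condition.

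I expect the technical heart to be the passage between $\tilde g\in C(\Omega)$ and the regulated function $g$ on $\T$: one must check that collapsing the Cantor fibres over $\mathcal O_0\cup\mathcal O_\beta$ introduces no uncontrolled oscillation, so that $g$ genuinely has one sided limits everywhere and is continuous elsewhere, and one must verify that Gottschalk--Hedlund is legitimately available, i.e.\ that boundedness of the Birkhoff sums of $\tilde f$ at the single orbit through $x$ upgrades to uniform boundedness on the minimal system $\Omega$. Once that infrastructure is installed, the decisive computation is entirely elementary, which is exactly the advantage of the tiling space viewpoint over Kesten's original Diophantine analysis.
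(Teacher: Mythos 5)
Your proof is correct, and the technical points you flag do go through: the right-coding section of $\T$ into the coding subshift is right-continuous with left limits (the collapsed fibres are two-point fibres, not Cantor sets), so the pushed-down transfer function $g$ is regulated, right-continuous, and continuous off the two endpoint orbits; Gottschalk--Hedlund does apply when the forward sums are bounded at a single point of a minimal system; and the telescoping of jumps along $\mathcal O_0$ is exactly the obstruction, with $\length(I)\notin\Z\xi+\Z$ used precisely to keep $\mathcal O_\beta$ out of the bookkeeping. But your route is genuinely different from the paper's. The paper first converts boundedness of $D(N)$ into bounded discrepancy, hence BD-equivalence to a lattice, of the associated cut-and-project set (Theorem \ref{LaczkovichTheorem}); it then computes $H^1_s(Z)$ from the cut-torus model of the tiling space (2- or 3-dimensional according to whether the two boundary lines of the strip are $\Z^2$-equivalent) and invokes the model-set rigidity theorem (Theorem \ref{KS14Theorem}, via Corollary \ref{2DimCor}) to identify the bounded-discrepancy classes as exactly a 2-dimensional subspace; equivalent boundaries then force \emph{every} class, in particular the one counting points of $Z$, to have bounded discrepancy, while inequivalent boundaries are handled by an acceptance-domain argument that transfers unbounded discrepancy from some pattern back to the strip $S$ itself. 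You instead keep the classical cocycle $\mathbf 1_I-\length(I)$ over the rotation, use the symbolic hull only to render it continuous so Gottschalk--Hedlund produces a continuous transfer function, and extract the arithmetic from the jumps of its push-down to $\T$; despite the cohomological phrasing, this is much closer in spirit to the dynamical-cocycle proofs of Helson and Hal\'asz that the paper explicitly contrasts itself with than to the PE-cohomology/BD argument. What each buys: your argument is shorter and self-contained (no \v Cech computation, no appeal to the rigidity theorem or to the Laczkovich equivalence), and the same jump telescoping gives Oren's orbit-by-orbit matching condition immediately; the paper's argument yields the extra conclusions that come with the tiling-space formulation, e.g.\ that when the boundaries are $\Z^2$-equivalent every point pattern locally derived from $Z$ is BD to a lattice, and it is built to generalize to higher-dimensional cut-and-project sets, where your one-dimensional jump analysis has no direct analogue. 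Two small points to tidy: for the Hecke--Ostrowski direction with $\length(I)\equiv k\xi$ and $|k|>1$ the bounded transfer function is a sum of $|k|$ translates of the fractional part, not the single-step observation you quote; and your normalization $I=[0,\beta)$ translates the base point $x$ as well, which is harmless but should be stated since $D(N)$ depends on $x$.
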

A generalization of Kesten's theorem for several disjoint intervals is given in the following theorem of Oren \cite{Oren}.
\begin{theorem}[Oren] \label{Oren} Let $I_{1},...,I_{L}\subset \T$ be $L$ disjoint intervals and $\xi\in\R$ be irrational. There exists a constant $C>0$ such that $|D(N;I_{1})+\cdots D(N;I_{L})|<C$ if, and only if, there is a permutation $\sigma$ such that $b_{\sigma(\ell)}-a_{\ell}\equiv k_{\ell}\xi \mod \Z$ for some $k_{\ell}\in \Z$. Here $I_{\ell}=[ a_{\ell},b_{\ell}  ]$. 
\end{theorem}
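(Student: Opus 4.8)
The plan is to convert the boundedness of $D(N;I_1)+\dots+D(N;I_L)$ into the vanishing of a pattern-equivariant cohomology class on an associated tiling space, and then to identify exactly which classes vanish. Write
\[
  f=\sum_{\ell=1}^{L}\mathbf{1}_{I_\ell}-\sum_{\ell=1}^{L}\length(I_\ell),
\]
a mean-zero step function on $\T$ whose only discontinuities are at the $2L$ points $a_1,b_1,\dots,a_L,b_L$, so that $D(N;I_1)+\dots+D(N;I_L)=\sum_{k=0}^{N}f(T_\xi^{k}x)$. Partition $\T$ into the arcs cut out by these $2L$ points and code the $T_\xi$-orbit of a point by the sequence of arcs it visits; the suspension of this coding (equivalently, the canonical codimension-one cut-and-project tiling of $\R$ whose window is this arc-partition) has hull $\Omega$, and the shift is a minimal almost $1$--$1$ extension $\pi\colon\Omega\to\T$ of $T_\xi$, branched exactly over $\bigcup_{k\in\Z}T_\xi^{k}\{a_1,b_1,\dots,a_L,b_L\}$. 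The crucial point of the construction is that, although $f$ is discontinuous on $\T$, its pullback $\tilde f=f\circ\pi$ is continuous on $\Omega$ --- in fact locally constant along the tiling, i.e.\ pattern equivariant --- because each $\mathbf{1}_{I_\ell}$ pulls back to the indicator of a clopen cylinder. Hence $\tilde f$ represents a class $[\tilde f]\in H^1_{\mathrm{PE}}\cong\check H^1(\Omega;\R)$, and $[\tilde f]$ is precisely the obstruction to the Delone set obtained from $\Omega$ by resizing tiles according to $f$ being bounded-displacement equivalent to a lattice. The theorem now reads: $[\tilde f]=0$ if and only if the permutation condition holds.

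For the ``if'' direction I would exhibit an explicit coboundary, the Hecke--Ostrowski telescoping carried along the permutation. Choose a basepoint $0\in\T$ outside $\overline{\bigcup_\ell I_\ell}$ (the case $\bigcup_\ell\overline{I_\ell}=\T$ is trivial: there $f\equiv 0$ a.e.\ and $\{b_\ell\}=\{a_\ell\}$ as multisets), and set $H_c=\mathbf{1}_{(0,c]}$ on $\T$; then $\mathbf{1}_{I_\ell}-\length(I_\ell)=(H_{b_\ell}-b_\ell)-(H_{a_\ell}-a_\ell)$, so reindexing the $b$-terms by $\sigma$,
\[
  f=\sum_{\ell=1}^{L}\Bigl[(H_{b_{\sigma(\ell)}}-H_{a_\ell})-(b_{\sigma(\ell)}-a_\ell)\Bigr].
\]
Since $b_{\sigma(\ell)}\equiv a_\ell+k_\ell\xi\bmod\Z$, each $H_{b_{\sigma(\ell)}}-H_{a_\ell}$ is $\pm$ the indicator of an arc of length congruent to $k_\ell\xi\bmod\Z$; and for such an arc $A$ the classical identity $\mathbf{1}_{[1-\xi,1)}-\xi=\beta-\beta\circ T_\xi$ with $\beta(y)=\{y\}$, together with its iterates (the ``if'' direction going back to Hecke and Ostrowski \cite{Hecke,Ostrowski}), produces a bounded $\beta_A$ with $\mathbf{1}_A-\length(A)=\beta_A-\beta_A\circ T_\xi$. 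Summing over $\ell$ gives $f=g-g\circ T_\xi$ for a bounded $g$, so the Birkhoff sums of $f$ are bounded.

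For the ``only if'' direction the topology does the work, in two steps. First, boundedness forces $[\tilde f]=0$: the Birkhoff sums of $\tilde f$ along the orbit of any $\hat x\in\pi^{-1}(x)$ are bounded, that orbit is dense since $\Omega$ is minimal, and $\tilde f$ is continuous, so the Birkhoff sums are bounded uniformly on $\Omega$; Gottschalk--Hedlund then yields $\tilde f=\tilde g-\tilde g\circ(\text{shift})$ with $\tilde g\in C(\Omega)$, whence $[\tilde f]=0$ in $\check H^1(\Omega;\R)$. Second, one must see which classes vanish. A Mayer--Vietoris computation for $\pi\colon\Omega\to\T$, in which the branch locus contributes exactly one extra real generator for each distinct $T_\xi$-orbit represented among $a_1,b_1,\dots,a_L,b_L$, should show that $\check H^1(\Omega;\R)$ has as a basis the density class $[\mathbf{1}]$ together with a generator $\theta_O$ for each such endpoint orbit $O$, with (up to sign) $[H_c-c]=\theta_O$ for $c\in O$ (and $[H_c-c]=0$ only on the basepoint orbit, which our choice of $0$ avoids). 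Writing $n_a(O)$ and $n_b(O)$ for the numbers of left, resp.\ right, endpoints lying on $O$, we then get
\[
  0=[\tilde f]=\sum_{\ell=1}^{L}\bigl([H_{b_\ell}-b_\ell]-[H_{a_\ell}-a_\ell]\bigr)=\sum_{O}\bigl(n_b(O)-n_a(O)\bigr)\,\theta_O ,
\]
and linear independence of the $\theta_O$ forces $n_a(O)=n_b(O)$ for every endpoint orbit $O$; this is exactly the existence of a permutation $\sigma$ with $a_\ell$ and $b_{\sigma(\ell)}$ on a common $T_\xi$-orbit, i.e.\ $b_{\sigma(\ell)}-a_\ell\equiv k_\ell\xi\bmod\Z$.

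The main obstacle is the cohomology computation in the last step: one must prove that distinct endpoint orbits genuinely contribute independent generators and that no further relations appear --- in pattern-equivariant language, that a locally constant primitive on $\Omega$ for a window step function can exist only when that function's jump points pair up under the rotation. Degenerate configurations (coincidences among the $2L$ endpoints, or an endpoint lying on the orbit of $0$) merge some of the $\theta_O$ and must be handled separately, but they only relax the vanishing condition and do not affect the conclusion.
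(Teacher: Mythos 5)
Your ``if'' direction is fine (it is the classical Hecke--Ostrowski telescoping and needs no topology), but the ``only if'' direction has a genuine gap, and it sits exactly where the paper has to invoke its rigidity input. Gottschalk--Hedlund gives you a transfer function $\tilde g$ that is merely \emph{continuous} on $\Omega$, i.e.\ weakly pattern equivariant; the coboundary of a weakly PE $0$-cochain need \emph{not} vanish in $\check H^1(\Omega;\R)\cong H^1_s$. Such classes are precisely the asymptotically negligible ones, and $H^1_{an}$ is a nonzero (one-dimensional) subspace here. Concretely, for $c$ on an endpoint orbit, $\mathbf{1}_{(c,c+\xi]}-\xi=\beta_c-\beta_c\circ T_\xi$ with $\beta_c(y)=\{y-c-\xi\}$, whose only discontinuity is at a cut point and which therefore pulls back continuously to $\Omega$; yet the class of $\mathbf{1}_{(c,c+\xi]}-\xi$ is a nonzero element of $H^1_{an}$ (essentially the differential of the internal coordinate from Theorem \ref{KS14Theorem}). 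So from bounded Birkhoff sums you may only conclude $[\tilde f]\in H^1_{an}$, not $[\tilde f]=0$. The same conflation infects your basis claim: $[H_{c+\xi}-(c+\xi)]-[H_c-c]$ is exactly such a nonzero asymptotically negligible class, so in genuine $\check H^1(\Omega;\R)$ the class $[H_c-c]$ does \emph{not} depend only on the $T_\xi$-orbit of $c$; only its image modulo $H^1_{an}$ does. (A smaller point: for $H_c=\mathbf{1}_{(0,c]}$ to be pattern equivariant at all you must adjoin the orbit of the basepoint $0$ to the coding.)

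Consequently the whole ``only if'' argument must be run in the quotient of $H^1$ by the subspace of classes of bounded discrepancy, and there your orbit classes $\theta_O$ are no longer automatically independent: their independence is equivalent to knowing that the bounded-discrepancy subspace is only two-dimensional (the span of $[dx]$ and $H^1_{an}$), which is exactly what the paper imports from \cite{KS14} via Theorem \ref{KS14Theorem} and Corollary \ref{2DimCor}, together with the already-proved single-interval case, Theorem \ref{mainTheorem}. The Mayer--Vietoris computation you flag as the main obstacle cannot supply this: it only yields the dimension $n+1$ of $\check H^1(\Omega;\R)$, which is the easy part, and says nothing about which classes admit continuous (weakly PE) primitives --- for a single interval that identification \emph{is} Kesten's theorem, so it cannot be waved through as a topological computation. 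With that input restored, your argument becomes essentially the paper's: form the quotient $H^1_{ud}$ by the bounded-discrepancy classes, note that the strips joining one fixed boundary representative to the others give spanning classes, get independence from the dimension count of Corollary \ref{2DimCor}, and read off the pairing of endpoints mod $\Z+\Z\xi$ from the vanishing of $[\tilde f]$ in $H^1_{ud}$.
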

In this paper we give purely  topological proofs of Kesten's and Oren's Theorems using the pattern equivariant cohomology of aperiodic tiling spaces. We reformulate and prove these theorems in the context of ``cut-and-project patterns'' and the ``Bounded Displacement (BD) equivalence relation.''  Our proof is based upon a recent topological rigidity result for {\it model sets} \cite{KS14}. \newline

\indent  If $S\subset \R^2$ is a strip\footnote{For readers who are familiar with cut-and-project tilings, the strip $S$ is simply $S=V+W$ where $V$ is the acting subspace and $W$ is the window.}, then we will use the notation
\[S(\Z)=S\cap \Z^2\]
to denote the $\Z$-points of $S$. If $S$ has irrational slope, then the projection of $S(\Z)$ onto a line
parallel to $S$ is called a {\it cut-and-project set} or a {\it model
  set.}\footnote{Cut-and-project sets can also be obtained from
  lattices in more than 2 dimensions, but in this paper we only
  consider those arising from 2-dimensional strips.}  To see the connection between Kesten's result and tilings,\footnote{This connection seems to be well known. See for instance \cite{scoop,H,HK,KSV}.} notice that there is a one-to-one correspondence (given by the projection onto the $x$-axis) between  the $\Z$-points of the strip\footnote{Here $\tilde{x}$ and $I$ are identified with their coset representatives in $[0,1)$.}
	\[
		S_{\xi,I}=\lb (x,y) : \xi x-y-\tilde{x}\in I \rb
	\]	 
and integers $k$ such that $T_{\xi}^{k}(\tilde{x})\in I$. So the discrepancies of the sequence $T_{\xi}(x),T_{\xi}^{2}(x),...$ and the associated cut-and-project set are one and the same.\newline

A current object of interest in tiling theory is the {\it BD equivalence relation}.\footnote{See \cite{APCG,H,HKW,HK,Solomon1,Solomon2} for recent developments and \cite{scoop,Laczkovich92,ST} for some earlier developments. BD equivalence is sometimes referred to as {\it wobbling equivalence} \cite{BG,DSS}.} Two subsets $Y_{1},Y_{2}\subset \R^N$ are said to be BD if there is a bijection $\varphi:Y_{1}\ra Y_{2}$ such that 
	\[
		\ds\sup_{\by\in Y_{1}} \| \by -\varphi(\by) \|<\infty.
	\]

It is not hard to see that a subset of $\R$ is BD to a lattice if and 
only if its discrepancy (in the sense of \S \ref{section:disc}) is bounded. (See \cite{Laczkovich92} for analogous statements in 
higher dimensions). Kesten's Theorem 
can then be restated in the language of aperiodic point
patterns and the BD equivalence relation as follows:

\begin{theorem}\label{mainTheorem}
  Let $Z$ be a 1 dimensional cut-and-project set obtained from an
  irrational strip in $\R^2$. $Z$ is BD to a lattice if and only if the
  boundary components of the strip are equivalent mod $\Z^2$.
\end{theorem}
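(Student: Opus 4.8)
\emph{Overview.} The plan is to package the obstruction to $Z$ being BD to a lattice as a single class in the pattern equivariant cohomology of its hull, and then to evaluate that class using the known structure of cut-and-project hulls together with the topological rigidity of model sets from \cite{KS14}. The ``if'' direction is classical (Hecke--Ostrowski) and also follows from the strip picture: when the two boundary lines of $S$ differ by $v\in\Z^2$ one has $\length(I)\equiv k\xi\bmod\Z$ and the translates $S+nv$, $n\in\Z$, partition the plane, from which a bounded-displacement bijection of $Z$ with a lattice is built directly. So the real content is the ``only if'' direction, to which everything below is devoted.

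\emph{From BD to a cohomology class.} Let $\Omega$ be the hull of $Z$, i.e.\ the tiling space of the associated $1$-dimensional tiling with its $\R$-translation action. Since $Z$ comes from a $2$-dimensional strip, $\Omega$ is one of the well-studied hulls of ``codings of a rotation'': it is minimal and uniquely ergodic, its maximal equicontinuous factor is the canonical torus $\T^2=\R^2/\Z^2$ via a parametrization map $\pi\colon\Omega\to\T^2$ that is one-to-one off a set of measure zero, and its \v{C}ech cohomology is computable from the slope of $S$ and the endpoints of $I$; in particular $\pi^*\colon\check H^1(\T^2;\Z)=\Z^2\to\check H^1(\Omega;\Z)$ is injective, and I will call its image the \emph{torus part}. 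Using the identification of pattern equivariant cohomology with $\check H^*(\Omega)$, I work with PE forms on $\R$. Let $\rho=\length(I)$ be the density of $Z$ (in the normalization coming from projection to the $x$-axis), put $\mu_Z=\sum_{z\in Z}\delta_z$, and form the closed PE $1$-current $\eta=\mu_Z-\rho\,dx$ of zero average; its primitive $D(x)=\int_0^x\eta$ is Kesten's error function, and by \S\ref{section:disc} the set $Z$ is BD to the lattice $\rho^{-1}\Z$ exactly when $D$ is bounded. So the theorem reduces to: $D$ is bounded $\iff$ $[\eta]$ lies in the torus part of $\check H^1(\Omega;\R)$, plus the identification of this with the strip condition.

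\emph{The dichotomy and the computation.} The implication ``torus part $\Rightarrow$ bounded'' is soft: if $[\eta]=\pi^*[\omega]$ then, after subtracting a PE coboundary, $\eta$ is pulled back from a closed $1$-form $\omega$ on $\T^2$, so $D(x)$ is the integral of $\omega$ along a segment of an irrational line in $\T^2$ --- a linear function of $x$ plus a bounded function --- whose linear coefficient is a multiple of the average of $\eta$ and hence vanishes; so $D$ is bounded. The reverse implication --- that a bounded PE primitive of a closed PE $1$-form on the hull of a model set cannot detect anything outside the torus part --- is precisely the topological rigidity proved in \cite{KS14}, and it is the one essential input. Granting it, one must decide when $[\eta]$ (equivalently $[\mu_Z]$, since $[dx]$ always lies in the torus part) belongs to $\pi^*\check H^1(\T^2;\R)$. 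Writing out the cut-and-project cocycle representing $[\mu_Z]$, its class modulo the torus part is carried by the two endpoints of the window $I$, i.e.\ by the two boundary lines of $S$, and it dies in the quotient exactly when those endpoints lie in a single $\Z^2$-orbit under the parametrization --- that is, when $\length(I)\equiv k\xi\bmod\Z$ for some $k\in\Z$, which says the boundary lines of $S$ are equivalent mod $\Z^2$ and is condition (\ref{KestenCondition}). With the dichotomy and the easy direction this finishes the proof.

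\emph{Main obstacle.} The crux is the bridge between the metric statement (bounded displacement, equivalently bounded discrepancy) and the topological statement (a \v{C}ech class lying in the torus part). One direction is elementary; the other --- ruling out ``exotic'' bounded primitives whose class is not pulled back from $\T^2$ --- is genuinely hard and is exactly where the rigidity theorem of \cite{KS14} enters. A secondary but real subtlety is the coefficient bookkeeping needed to make ``$D$ bounded'' correspond to vanishing in the precise quotient $\check H^1(\Omega;\R)/\pi^*\check H^1(\T^2;\R)$ rather than in some coarser group, and to match the ``endpoints in one orbit'' condition with the integer $k$ in (\ref{KestenCondition}).
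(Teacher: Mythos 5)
Your overall machinery is the same as the paper's: Lemma \ref{weakbdLemma} converts ``bounded error'' into ``primitive is weakly PE'', Theorem \ref{KS14Theorem} says the asymptotically negligible classes are one-dimensional, and together (as in Corollary \ref{2DimCor}) the classes representable with bounded discrepancy form exactly the two-dimensional ``torus part'' spanned by $[dx]$ and the differential of the coordinate on $H$. Your soft implication (torus part $\Rightarrow$ bounded) and your elementary fundamental-domain argument for the ``if'' direction are fine (the latter is even simpler than the paper's route, which derives ``if'' from the dimension count). The problem is the final step of the ``only if'' direction. You assert that, ``writing out the cut-and-project cocycle representing $[\mu_Z]$, its class modulo the torus part is carried by the two endpoints of the window and dies in the quotient exactly when those endpoints lie in a single $\Z^2$-orbit.'' No such computation is carried out, and the half of it you actually need --- endpoints in distinct orbits $\Rightarrow$ $[\mu_Z]\notin\pi^*\check H^1(\T^2;\R)$ --- is precisely the remaining content of Kesten's theorem once the rigidity input is granted; stating it as the outcome of an unperformed computation is where the proof stops. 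Note also that the obvious real-coefficient invariant cannot do this job: the density (Ruelle--Sullivan) pairing maps the real torus part onto all of $\R$, so no trace/average argument excludes $[\mu_Z]$ from the two-dimensional real span, and the dimension count by itself only tells you that \emph{some} strongly PE class has unbounded discrepancy, not that the class of $Z$ does.

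This is exactly the point to which the paper devotes the second half of its proof. There, the dimension count produces a pattern $P$ with unbounded discrepancy; its acceptance domain is decomposed into strips whose boundary lines are $\Z^2$-translates of the two boundaries $\ell_1,\ell_2$ of $S$; the already-proven equivalent-boundary case is then applied to the strips between $\ell_1$ and $\ell_1'$ and between $\ell_2$ and $\ell_2'$, and additivity of discrepancies pushes the unboundedness from the strip between $\ell_1'$ and $\ell_2'$ onto $S$ itself, i.e.\ onto $Z$. An alternative way to fill your gap, closer to your phrasing, is the spanning argument the paper uses for Theorem \ref{Oren}: indicator cochains span $H^1$, and every acceptance domain is, modulo indicators of intervals whose endpoints lie in one $\Z^2$-orbit (which have bounded discrepancy), an integer multiple of the window class; hence the image of $[\mu_Z]$ generates the one-dimensional quotient by the bounded-discrepancy classes and so is nonzero. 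Either argument would complete your proof; as written, the crucial non-membership of $[\mu_Z]$ in the torus part is asserted rather than proved.
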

%

%
%

\section{Preliminaries}
In this section we will review some of the concepts and results that
we will use in the proof of our main results. We will not present this
material in generality. The interested reader is encouraged to consult
the references for a detailed treatment of the ideas below.
%
%

\subsection{The topology of cut-and-project patterns}

A 2-to-1 cut-and-project pattern $Z$ is a subset of $\R^2$ obtained
from the following construction. Let $V$ and $H$ be transverse lines
in $\R^2$, and $W\subset H$ (the {\it window}) be a compact set that
is the closure of its interior. Let $\pi_{V}:\R^{2}\ra \R^{2}$ be a
linear projection of $\R^2$ onto $V$. Then
	\[
		Z=\pi_{V}\big(   (V+W)\cap \Z^{2}  \big),
	\]
        where $V+W=\lb {\bf v+w} ~:~ {\bf v}\in V,~{\bf w}\in W
        \rb$. If $\partial W$ has Hausdorff measure zero, then $Z$ is
        called {\it regular.} In this paper, $W$ is either an interval
        or a finite union of intervals, so $Z$ is always regular. \newline

Our main tool is the {\it pattern equivariant} cohomology of $Z$
(\cite{K1}, or see \cite{S1} for a review). We
think of $Z$ as the vertices of a tiling of $V\cong \R$ by
intervals. We abuse notation by denoting this tiling as $Z$.  A
function $f:V\ra \R$ is said to be {\it strongly} pattern equivariant,
or strongly PE, if there exists an $R>0$ such that for any ${\bf
  v},{\bf v}^{\prime}\in V$ such that $B_{R}({\bf v})\cap Z$ and
$B_{R}({\bf v}^{\prime})\cap Z$ are translates of each other, we have
$f({\bf v})=f({\bf v}^{\prime})$. A function is {\it weakly} PE if it
is the uniform limit of strongly PE functions. We can similarly speak
of strongly and weakly PE 0-cochains that are evaluated on the
vertices of $Z$ and 1-cochains that are evaluated on the edges of $Z$
(and so on for higher-dimensional tilings. In our case the cochain
complex ends at dimension 1).  The coboundary $d\alpha$ of a (strongly
or weakly) PE cochain $\alpha$ is easily seen to be (strongly or
weakly) PE.\footnote{We denote the coboundary by $d$ since $\delta$
  denotes the density of a point pattern.}

The cohomologies of the resulting cochain complexes are called the
(strong or weak) PE cohomologies of $Z$, and are denoted $H^*_{s}(Z)$
and $H^*_{w}(Z)$.  
Kellendonk \cite{K1} (in a slightly different
setting) and Sadun \cite{S2} (in this setting) showed that the strong
PE cohomology of $Z$ is isomorphic to the \v Cech cohomology of the
associated tiling space, and that the strong PE cohomology with real
coefficients is isomorphic to the \v Cech cohomology with real
coefficients. The weak PE cohomology (necessarily with real or complex
coefficients) is much more complicated.

By definition, a nontrivial class in $H^1_s(Z)$ can never be
represented by the coboundary of a strongly PE 0-cochain. However, it
sometimes can be represented by the coboundary of a weakly PE
0-cochain. If so, the class is called {\it asymptotically negligible}
\cite{CSshape, K2}, in which case {\it every} representative of the class
is of this form. Let $H^1_{an}(Z) \subset H^1_s(Z)$ denote the
asymptotically negligible classes. These classes are described by the
following lemma, which is essentially Corollary 4.4 from \cite{KS13}.
\begin{lemma}\label{weakbdLemma}
  For a closed strongly PE 1-cochain $\alpha$, there is a 0-cochain
  $\beta$ such that $\alpha= d\beta$. 
Furthermore, $\beta$ is weakly
  pattern equivariant if, and only if, $\beta$ is bounded.
\end{lemma}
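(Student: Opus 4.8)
The plan is to prove both directions by working directly with the tiling $Z$ of $V\cong\R$ by intervals whose vertices are the points of $Z$. Since the cochain complex of $Z$ terminates in degree $1$, a strongly PE $1$-cochain $\alpha$ is automatically closed, and I would first show it is exact as an \emph{ordinary} cochain: define $\beta$ on the vertices of $Z$ by fixing one vertex $v_0$, setting $\beta(v_0)=0$, and declaring $\beta(v)=\sum \alpha(e_i)$ along the edge path from $v_0$ to $v$. This is well defined because in dimension one there is a unique edge path between any two vertices, and it manifestly satisfies $d\beta=\alpha$. The content of the lemma is therefore the equivalence ``$\beta$ weakly PE $\iff$ $\beta$ bounded,'' and this is where the two directions split.

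For the easy direction, suppose $\beta$ is weakly PE. Then $\beta$ is a uniform limit of strongly PE $0$-cochains $\beta_n$. Each $\beta_n$ takes only finitely many values: by strong pattern equivariance with some radius $R_n$, the value $\beta_n(v)$ depends only on the pattern of $Z$ in $B_{R_n}(v)$, and since $Z$ has finite local complexity (the window is an interval or finite union of intervals, so only finitely many $R_n$-patterns occur up to translation), $\beta_n$ is bounded. A uniform limit of uniformly bounded functions is bounded, so $\beta$ is bounded. For the converse—which I expect to be the main obstacle—I would argue that a bounded $\beta$ with $d\beta=\alpha$ strongly PE can be approximated uniformly by strongly PE $0$-cochains. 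The idea is to average or smooth $\beta$ over large regions: because $\alpha=d\beta$ is strongly PE with some radius $R$, the \emph{increments} of $\beta$ are strongly PE, so $\beta$ itself is ``strongly PE up to an additive constant'' on each ball of radius $R$. One then wants to show that the obstruction to globalizing this—the drift of these additive constants—is controlled precisely by the boundedness of $\beta$. Concretely, I would invoke a Gottschalk–Hedlund / ergodic-averaging type argument on the tiling space: the function $\beta$ descends (after the exactness step) to a cocycle over the minimal $\R$-action on the tiling space hull $\Omega_Z$, and a bounded transfer function for a continuous cocycle over a minimal system can be upgraded to a continuous one, whose pullback is strongly PE. Alternatively, following the cited Corollary 4.4 of \cite{KS13}, one constructs the approximating $\beta_n$ by truncating $\beta$ to be constant outside larger and larger ``safe'' patches, using minimality to guarantee recurrence of patterns.

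The hard part will be making the approximation in the converse direction quantitatively uniform: one must produce, for each $\varepsilon>0$, a single radius $R'$ and a genuinely strongly-PE cochain $\beta'$ with $\|\beta-\beta'\|_\infty<\varepsilon$, and the naive ``freeze $\beta$ on each $R'$-pattern'' recipe is not well defined because two occurrences of the same $R'$-pattern can carry different values of $\beta$ (differing by the accumulated drift). The resolution is that boundedness of $\beta$ forces this drift to be \emph{recurrent}—it cannot escape to infinity—so by minimality of the hull the set of values of $\beta$ on a fixed pattern type is relatively compact, and one may choose $\beta'$ to take, say, the value of $\beta$ at the ``first'' occurrence, absorbing the bounded error into $\varepsilon$ by taking $R'$ large enough that the drift between nearby occurrences is small. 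I would carry this out by: (1) establishing the ordinary exactness and well-definedness of $\beta$; (2) proving weakly PE $\Rightarrow$ bounded via finite local complexity; (3) setting up the cocycle-over-the-hull reformulation; (4) applying the Gottschalk–Hedlund-style upgrade, or the direct truncation construction of \cite{KS13}, to get strongly PE approximants from a bounded $\beta$; and (5) noting uniqueness of $\beta$ up to a constant, so the dichotomy is a property of the cohomology class, not the representative.
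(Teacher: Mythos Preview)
The paper does not give its own proof of this lemma: it simply attributes the result to Corollary~4.4 of \cite{KS13} and moves on. So there is no argument in the paper to compare yours against, and your proposal stands or falls on its own.

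Your outline is essentially correct and is in fact the standard route to this result. The existence of $\beta$ via edge-path integration is fine in dimension one, and the easy direction (weakly PE $\Rightarrow$ bounded) via finite local complexity is exactly right. For the converse, the Gottschalk--Hedlund strategy is the right idea and is indeed what underlies the cited result: $\alpha$, being strongly PE, descends to a continuous function on the canonical transversal of the hull (or a continuous cocycle for the $\R$-flow), the Birkhoff sums are your $\beta$, and boundedness of $\beta$ at one point of a minimal system forces the existence of a \emph{continuous} transfer function $g$; since $\beta$ and the pullback of $g$ differ by a constant, $\beta$ itself is weakly PE.

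One slip to fix: you write that the continuous transfer function on the hull ``pulls back to a strongly PE'' cochain. It does not---continuous functions on the hull correspond to \emph{weakly} PE functions (strongly PE corresponds to transversally locally constant). That is exactly what you want here, so the argument still closes, but the sentence as written is wrong. Also, your phrase ``$\beta$ descends \ldots\ to a cocycle'' is backwards: it is $\alpha$ that descends to the continuous cocycle, and $\beta$ is the (a~priori only bounded) transfer function that Gottschalk--Hedlund upgrades to a continuous one. With those two wording corrections your sketch is a valid proof, and the alternative ``direct truncation'' paragraph can be dropped since the Gottschalk--Hedlund route already finishes the job.
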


We now use a recent result about $H^1_{an}$ for cut-and-project sets.
The following theorem is a special case of a theorem from \cite{KS14}.
\begin{theorem}\label{KS14Theorem}
  Let $Z$ be a 2-to-1 dimensional cut-and-project set whose window $W$
  is an interval or a finite union of intervals. Then
  $H^{1}_{an}(Z)$ is one dimensional and is generated by the
  differential of the coordinate function on $H$.
\end{theorem}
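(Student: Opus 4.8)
The statement splits into the inequalities $\dim H^{1}_{an}(Z)\ge 1$ and $\dim H^{1}_{an}(Z)\le 1$, and my plan is to establish the first by exhibiting the class of $d\beta_{H}$ and the second by transporting the question to the transversal of the tiling space. For the lower bound, recall that $\pi_{V}$ restricts to a bijection $Z\to (V+W)\cap\Z^{2}$, and let $\beta_{H}\colon Z\to\R$ send a point of $Z$ to the $H$-coordinate of the lattice point lying over it. Then $\beta_{H}$ is bounded, its values lying in $W$. Its coboundary $d\beta_{H}$ assigns to the edge between consecutive points of $Z$ the $H$-component of the corresponding return vector in $\Z^{2}$; since $W$ is bounded there are only finitely many return vectors and each is recognizable in a bounded patch, so $d\beta_{H}$ is strongly PE (and closed, the cochain complex ending in degree $1$). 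Next I would check that $[d\beta_{H}]\ne 0$ in $H^{1}_{s}(Z)$: if $d\beta_{H}=d\gamma$ with $\gamma$ strongly PE, then $\beta_{H}-\gamma$ is constant and so $\beta_{H}$ itself is strongly PE, which is false because two $R$-patches of $Z$ that agree after a translation of $V$ generically lie over lattice points with distinct $H$-coordinates --- this reflects the aperiodicity of $Z$. Since $\beta_{H}$ is bounded, Lemma~\ref{weakbdLemma} then puts $[d\beta_{H}]$ into $H^{1}_{an}(Z)$, giving $\dim H^{1}_{an}(Z)\ge 1$.

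For the upper bound I would pass to the canonical transversal $\Xi$ of the tiling space of $Z$, a Cantor set carrying a minimal, uniquely ergodic homeomorphism $T$ (the first return of the translation flow). Under the standard dictionary a strongly PE $1$-cochain becomes a locally constant function on $\Xi$, a coboundary of a strongly PE $0$-cochain becomes a function of the form $g\circ T-g$ with $g$ locally constant, and $\beta_{H}$ becomes the continuous ``position in $W$'' function $h\colon\Xi\to\R$, which is not locally constant. Now let $[\alpha]\in H^{1}_{an}(Z)$; by Lemma~\ref{weakbdLemma}, $\alpha=d\beta$ with $\beta$ bounded, which says exactly that the Birkhoff sums of the locally constant function $\hat\alpha$ representing $\alpha$ are uniformly bounded. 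By the Gottschalk--Hedlund theorem together with unique ergodicity, this forces $\hat\alpha=f\circ T-f$ for a \emph{continuous} $f\colon\Xi\to\R$. Hence $H^{1}_{an}(Z)$ is the quotient of $\{f\in C(\Xi,\R):f\circ T-f\text{ is locally constant}\}$ by the locally constant functions, and $\dim H^{1}_{an}(Z)\le 1$ becomes the assertion: \emph{every continuous $f$ on $\Xi$ whose $T$-difference is locally constant differs from a scalar multiple of $h$ by a locally constant function.}

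I expect this last assertion to carry essentially all the difficulty, and it is precisely the special case of \cite{KS14} being invoked. Proving it calls for a close analysis of the clopen partitions of $\Xi$ cut out by the $T$-orbit of the finitely many component endpoints of $W$, showing that, apart from a locally constant perturbation of a multiple of $h$, no continuous function on $\Xi$ has a locally constant $T$-difference; heuristically, any additional degree of freedom created by $W$ having several components --- for instance a function recording which component a point lies over --- has a $T$-difference with linearly growing, rather than bounded, Birkhoff sums, and so contributes nothing. A plausible alternative is to identify $H^{1}_{an}(Z)$ with the kernel of the comparison map $H^{1}_{s}(Z;\R)\cong\check{H}^{1}$ of the tiling space $\to H^{1}_{w}(Z;\R)$ and to compute that kernel directly when $W$ is a finite union of intervals. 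Either way, combined with the lower bound this gives $\dim H^{1}_{an}(Z)=1$ with generator $[d\beta_{H}]$, the lower bound itself being essentially formal once Lemma~\ref{weakbdLemma} is in hand.
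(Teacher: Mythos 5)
The paper offers no internal proof of this statement to compare against: Theorem \ref{KS14Theorem} is imported wholesale as a special case of the rigidity theorem of \cite{KS14}. Measured as a stand-alone argument, your proposal gets the easy half right and cleanly: $\beta_H$ is bounded, $d\beta_H$ is strongly PE and closed, its class is nonzero (even more simply than you argue: a strongly PE $0$-cochain takes only finitely many values on an FLC pattern, whereas $\beta_H$ takes infinitely many), and Lemma \ref{weakbdLemma} then places $[d\beta_H]$ in $H^1_{an}(Z)$. Your Gottschalk--Hedlund translation of the upper bound to the transversal is also the correct dictionary (minimality alone suffices there; unique ergodicity is not needed), and it faithfully identifies $H^1_{an}(Z)$ with $\{f\in C(\Xi,\R): f\circ T-f \text{ locally constant}\}$ modulo locally constant functions.

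The genuine gap is that the upper bound -- the entire nontrivial content of the theorem -- is not proved. The assertion that every continuous $f$ on $\Xi$ whose $T$-difference is locally constant agrees, modulo locally constant functions, with a scalar multiple of the internal-coordinate function $h$ is exactly the rigidity statement of \cite{KS14}, and your proposal explicitly defers it. The heuristic offered (that any ``extra degree of freedom'' has linearly growing Birkhoff sums) does not close it: it only inspects a few candidate functions, such as the record of which window component a point lies over, which is in any case locally constant on $\Xi$ and hence contributes nothing; it says nothing about an arbitrary continuous solution of the cohomological equation. Without an actual argument here -- for instance via the torus parametrization or the maximal equicontinuous factor, showing that any such $f$ must factor affinely through $h$ -- the bound $\dim H^1_{an}(Z)\le 1$ is assumed rather than established, and the proposal in effect reduces the theorem to itself. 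So either cite \cite{KS14} for this step, as the paper does, or supply that analysis; note also the minor technical point that your construction of $\beta_H$ presumes $\pi_V$ is injective on $(V+W)\cap\Z^2$, which requires a word when the window is a long union of intervals.
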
 
%
%

\subsection{Discrepancies and the BD equivalence relation}\label{section:disc}
Given a discrete subset $Y$ of $\R$, a number $\delta>0$, and an
interval $I$, we define the {\it discrepancy} of $Y$ with respect to
$\delta$ and $I$ to be
	\[
        \disc_{Y}(I,\delta)=\left| \# I\cap Y -\delta\length(I).
        \right| \] 

If there exists a $\delta>0$ for which
        $\disc_{Y}(I,\delta)=o(\length(I))$, then one expects $\#I\cap
        Y\approx \delta\length(I)$ for large intervals $I$. If such a
        number $\delta>0$ exists, then it is unique and it is called
        the {\it density} of $Y$. Hence with the correct choice of
        $\delta>0$ (if it does exist), the discrepancy is a measure of
        error of the expected number of points of $Y$ in $I$, versus
        the true number of points.

The following is a special case of a theorem of Laczkovich 
\cite{Laczkovich92}, and can also be easily proved directly: 

\begin{theorem}\label{LaczkovichTheorem}
  For a discrete subset $Y$ of $\R$ and $\delta>0$, the following are
  equivalent:
	\begin{enumerate}[(i)]
		\item $Y$ is  BD to a lattice of covolume $\delta^{-1}$. 
		\item There exists a constant $c>0$ such that for
                  every finite interval $I$
			\[
				\disc_{Y}(I,\delta)<c.
			\]
	\end{enumerate}
\end{theorem}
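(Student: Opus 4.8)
The plan is to establish the two implications directly by interval counting; in one dimension this sidesteps the Hall-type matching argument used for the general result of \cite{Laczkovich92}, and uses no topology.

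For (i) $\Rightarrow$ (ii) I would take a bijection $\varphi\colon Y\to\delta^{-1}\Z$ realizing the BD equivalence, say with $|y-\varphi(y)|\le M$ for all $y\in Y$, and fix an arbitrary interval $I=[a,b]$. Then $\varphi^{-1}$ carries $\delta^{-1}\Z\cap[a+M,b-M]$ into $I\cap Y$, while $\varphi$ carries $I\cap Y$ into $\delta^{-1}\Z\cap[a-M,b+M]$; since $\varphi$ is a bijection this sandwiches $\#(I\cap Y)$ between the numbers of lattice points in the shrunk and the enlarged intervals. Each of those numbers differs from $\delta\,\length(I)$ by a constant depending only on $M$ and $\delta$, which gives (ii). (The range $\length(I)<2M$ must be noted separately, but there both $\#(I\cap Y)$ and $\delta\,\length(I)$ are already bounded by such a constant.)

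For (ii) $\Rightarrow$ (i) I would first extract from the discrepancy bound the structural facts that $Y$ is relatively dense, that every interval of length $r$ meets $Y$ in at most $\delta r+c$ points, and that $Y$ is bi-infinite (a half-line $[-T,0]$ would otherwise violate (ii) for large $T$). Hence $Y$ can be listed in increasing order as $\{y_n\}_{n\in\Z}$. The crux is to apply (ii) to the interval spanned by $y_0$ and $y_n$, whose only points of $Y$ are $y_0,\dots,y_n$: this forces $|n|+1$ to equal $\delta\,|y_n-y_0|$ up to an additive error at most $c$, and rearranging shows $y_n-\delta^{-1}n$ lies in a fixed bounded interval for every $n\in\Z$. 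The assignment $y_n\mapsto\delta^{-1}n$ is then a bijection from $Y$ onto the lattice $\delta^{-1}\Z$ (which has covolume $\delta^{-1}$) that displaces every point by a bounded amount, i.e.\ a BD equivalence.

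The step I expect to demand the most care is the bookkeeping in the converse: checking that the increasing enumeration is well defined, that the index is pinned down consistently at $0$ on both sides of the origin, and---above all---that the constant comparing $y_n$ with $\delta^{-1}n$ is uniform over all of $\Z$ and not merely bounded on each half-line separately. That uniformity is precisely where one uses the hypothesis that $\disc_Y(I,\delta)<c$ for \emph{every} interval $I$; the interval counts themselves are elementary.
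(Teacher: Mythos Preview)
The paper does not actually supply a proof of this theorem: it simply states the result as a special case of Laczkovich's theorem \cite{Laczkovich92} and remarks that it ``can also be easily proved directly.'' Your proposal is exactly such a direct proof, and it is correct; the sandwiching argument for (i)$\Rightarrow$(ii) and the enumeration/interval-count argument for (ii)$\Rightarrow$(i) are the standard elementary route in dimension one, and the care you flag about uniformity of the bound over both half-lines is warranted but handled by your argument.
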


We can similarly define the discrepancy of any pattern, or of any
strongly PE 1-cochain. Every such 1-cochain $\alpha$ can be written as
a finite linear combination
\[ \alpha = \sum_j c_j \chi(P_j), \] 
where the {\it indicator cochain} $\chi(P_j)$ evaluates to 1 on a
particular edge of the pattern $P_j$ and to zero on all other edges.
Let
\[ \alpha_0 = \sum_j c_j [\chi(P_j) - \delta(P_j) dx], \] 
where $\delta(P_j)$ is the density of $P_j$ and $dx$ is the 
1-cochain that assigns to each
edge its length. (These densities are well-defined thanks to the
unique ergodicity of cut-and-project sets.) The discrepancy of
$\alpha$ over an interval is $\alpha_0$ applied to that interval. This
is a linear combination of the discrepancies of the patterns $P_j$.

A cochain $\alpha$ has bounded discrepancy if and only if $\alpha_0$
has bounded integral, which is if and only if $\alpha_0$ represents an
asymptotically negligible class. Equivalently, $\alpha$ has bounded
discrepancy if and only if the cohomology class of $\alpha$ is a
linear combination of a class in $H^1_{an}$ and the class of $dx$.
The following is then an immediate corollary of Theorem
\ref{KS14Theorem}:

\begin{corollary}\label{2DimCor}Let $Z$ be a 2-to-1 dimensional
  cut-and-project set whose window $W$ is an interval or a finite
  union of intervals. Then the set of 1-cohomology classes that are
  represented by cochains with bounded discrepancy is two dimensional.
\end{corollary}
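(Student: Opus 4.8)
The plan is to deduce Corollary \ref{2DimCor} directly from Theorem \ref{KS14Theorem} together with the discussion immediately preceding the statement, which identifies the cochains of bounded discrepancy cohomologically. First I would record the cohomological criterion already established: a strongly PE 1-cochain $\alpha$ has bounded discrepancy if and only if its cohomology class lies in the subspace of $H^1_s(Z;\R)$ spanned by $H^1_{an}(Z)$ and the class $[dx]$. The ``only if'' direction uses Lemma \ref{weakbdLemma}: after subtracting off the right multiple of $dx$ to kill the density, bounded discrepancy of $\alpha$ means $\alpha_0$ has bounded integral, hence (writing $\alpha_0 = d\beta$ for the antiderivative 0-cochain $\beta$, which is the integral) $\beta$ is bounded, hence $\beta$ is weakly PE, hence $[\alpha_0]\in H^1_{an}(Z)$. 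The ``if'' direction is the same argument run backwards, noting that $[dx]$ itself obviously has linear (hence unbounded) integral but contributes zero discrepancy by construction of $\alpha_0$.

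Next I would compute the dimension. By Theorem \ref{KS14Theorem}, $H^1_{an}(Z)$ is one-dimensional, generated by $d$ of the coordinate function on $H$. So the space of bounded-discrepancy classes is spanned by two classes: the generator of $H^1_{an}(Z)$ and $[dx]$. To conclude it is exactly two-dimensional I must check these two classes are linearly independent in $H^1_s(Z;\R)$. This is where the one genuine point of the argument lies: one must verify $[dx]\notin H^1_{an}(Z)$, i.e.\ that $dx$ does not have bounded integral. But this is immediate, since the integral of $dx$ over $[0,L]$ is $L$, which is unbounded; equivalently, the density term $\delta \cdot dx$ is precisely what one subtracts to get $\alpha_0$, so $dx$ has nonzero density and cannot be asymptotically negligible. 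Hence the two generators are independent and the space has dimension exactly $2$.

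The main (and only) obstacle is bookkeeping rather than mathematics: making sure the passage from ``bounded discrepancy of the pattern/cochain'' to ``the associated $\alpha_0$ represents an asymptotically negligible class'' is airtight, in particular that the antiderivative $\beta$ of a strongly PE closed 1-cochain is well-defined up to a constant and that boundedness of $\beta$ is equivalent to boundedness of the discrepancy integrals over all intervals. Both facts are supplied by Lemma \ref{weakbdLemma} and the definition of $\alpha_0$, so no new estimates are needed. I would therefore present the corollary's proof in a few lines: combine the cohomological reformulation of bounded discrepancy with $\dim H^1_{an}(Z) = 1$ from Theorem \ref{KS14Theorem} and the observation that $[dx]$ is not asymptotically negligible, giving total dimension $1 + 1 = 2$.
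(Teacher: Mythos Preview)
Your proposal is correct and follows exactly the paper's approach: the paper states this as an ``immediate corollary'' of Theorem~\ref{KS14Theorem}, relying on the preceding paragraph's identification of bounded-discrepancy classes with $\mathrm{span}(H^1_{an}(Z),[dx])$. You have simply spelled out the one-line linear-independence check ($[dx]\notin H^1_{an}(Z)$ because $dx$ has unbounded integral) that the paper leaves implicit.
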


%
%
\section{Proof of Theorem \ref{mainTheorem}}
Let $S$ be an irrational strip, and write $S=V+W$ where $V$ is a one dimensional
subspace of $\R^2$ and $W\subset H$ (a subspace transverse to $V$) is
a closed interval. 

\begin{proof}

We assume without loss of generality that $\partial S \cap \Z^2$ is empty,
since there are at most two points in $\partial S \cap \Z^2$ and these 
do not affect whether the discrepancy is bounded. 

  The \v Cech cohomology of a tiling space $\mathcal{T}$ associated
  with a non-singular 2-to-1 dimensional cut-and-project set whose window is an
  interval is well understood \cite{FHK}. The space is homeomorphic to
  a ``cut torus'', obtained by taking $\T^2$, removing a copy of
  $\pi(\partial S)$, and gluing each point back in twice, once as a
  limit from one side and once as the limit from the other side.  The
  resulting space has the cohomology of a once- or twice-punctured
  torus, depending on whether $\pi(\partial S)$ consists of one or two
  path components.  In particular, if the boundaries $\ell_{1,2}$ are
  related by an element of $\Z^2$, then $H^1_s(Z) = \R^2$, while
if the boundaries are not related then $H^1_s(Z) = \R^3$,
  since $H^1$ of a once- or twice-punctured torus is 2- or 3-dimensional.

Suppose that the two components of $\partial S$ are equivalent 
(mod $\Z^2$), and hence that $H^1_s(Z) = \R^2$.  
  By Corollary \ref{2DimCor}, the cohomology classes of 1-cochains
with bounded discrepancy is also 2 dimensional, so {\it
  all} classes in $H^1$ are represented by cochains with bounded
discrepancy. Adding the coboundary of a strongly PE 0-cochain to a
1-cochain does not change the boundedness (or unboundedness) of the
discrepancy of that 1-cochain, so in fact all 1-cochains have bounded
discrepancy. This shows not only that the cut-and-project set $Z$ 
has bounded discrepancy (and so is
BD to a lattice), but also that {\it any point pattern $Z'$ locally
  derived from $Z$ is BD to a lattice}.

If the two components of $\partial S$ are not equivalent, then 
$H^1_s(Z)$ is strictly larger than the set of 1-cochains with bounded
discrepancy, so 
there exists a strongly PE cochain
  \[ \alpha = \sum_j c_j \chi(P_j) \] 
  with unbounded discrepancy. The discrepancy of $\alpha$ is a linear
  combination of the discrepancies of the indicator cochains
  $\chi(P_j)$, so at least one of the patterns $P_j$ must have
  unbounded discrepancy.  Let $P$ be such a pattern with unbounded
  discrepancy.

  The indicator cochain $\chi_P$ evaluates to 1 on edges whose left
  endpoints are projections of points in an ``acceptance domain'' $V+
  \tilde W$, where $\tilde W \subset W$.  $\tilde W$ is obtained by
  applying the condition that a certain finite set of points must
  appear in the pattern, and another finite set must not appear. As
  such, $\tilde W +V$ is the intersection of a finite number of
  translates of $S$ by fixed elements of $\Z^2$ and a finite number of
  translates of $\R^2 \backslash S$ by fixed elements of
  $\Z^2$. $\tilde W$ can thus be written as a disjoint union of
  finitely many intervals $W_i$, each of whose boundary components are
  related to the boundaries of $W$ by elements of $\Z^2$.

  Since the multi-strip $\tilde W + V$ has unbounded discrepancy, at
  least one of the strips $\tilde W_i + V$ must have unbounded
  discrepancy. Let $\ell'_{1,2}$ be the boundaries of $\tilde W_i
  +V$. By Theorem \ref{mainTheorem} with $\pi(\partial S)$ path
  connected, which we have already proven, $\ell'_1$ and $\ell'_2$
  cannot be equivalent (mod $\Z^2$). Thus $\ell_1'$ must be equivalent
  to one component $\ell_1$ of $\partial S$ and $\ell_2'$ must be
  equivalent to the other component $\ell_2$.

  We apply Theorem \ref{mainTheorem} with $\pi(\partial S)$
  path-connected yet again. The strip between $\ell_1$ and $\ell_1'$
  has bounded discrepancy, and the strip between $\ell_2$ and $\ell_2'$
  has bounded discrepancy, and the strip between $\ell_1'$ and
  $\ell_2'$ has unbounded discrepancy, so the strip between $\ell_1$
  and $\ell_2$ must have unbounded discrepancy. But that is precisely
  $Z$. Since $Z$ has unbounded discrepancy, it cannot be BD to a
  lattice.
\end{proof}
%

%
\section{Proof of Oren's Theorem}
Our proof of Oren's theorem follows the same lines as our proof of
Theorem \ref{mainTheorem}. The main idea is to identify generators for
the cohomology with unbounded discrepancy (appealing again to
Corollary \ref{2DimCor} and also to Theorem \ref{mainTheorem}) and
observe that the class of the combined intervals yield the trivial
class exactly when the hypotheses of Oren's theorem are satisfied.
\begin{proof}[Proof of Theorem \ref{Oren}]
  Let $S$ be a disjoint union of strips $S_{1},...,S_{L}$ and suppose
  there are $n$ distinct boundary components of $S$ modulo $\Z^2$. In
  the notation of Theorem \ref{Oren} the strip $S_{\ell}$ is given by
	\[
		S_{\ell}=\lb (t,y)~:~ \xi t-y-x \in I_{\ell} \rb.
	\]

        Let $E$ be the convex hull of $S$ and let $\mathcal{T}$ be the
        {\it colored} cut-and-project set obtained in the following
        way. Color a point $p\in E(\Z)$ ``$\ell$'' if $p$ belongs to
        $S_{\ell}$ and ``$\omega$'' otherwise. Let $\mathcal{T}$ be
        the projection of $E(\Z)$ onto any line parallel to $E$

        Keeping in mind that $\mathcal{T}$ is colored, we have
        $H^{1}(\mathcal{T})=\R^{n+1}$ since the associated tiling
        space of $\mathcal{T}$ is a cut-torus with $n$ cuts. Let
        $H^1_{ud}$ be the quotient of $H^1(\mathcal{T})$ by the 
subspace of classes with bounded discrepancy.  
By Corollary \ref{2DimCor} this quotient space is
        $(n-1)$-dimensional. We will now describe a set of generators
        for $H^{1}_{ud}$.

        Let $L_{1},...,L_{n}$ be boundary components that represent
        each of the $\Z^2$ classes in $\partial S$, and let $B_{j}$ be
        the convex hull of $L_{1}$ and $L_{j}$. We claim the classes
        (in $H^1_{ud}$) of the indicator cochains $i_{B_j}$ of the
        $B_{j}$'s form a basis for $H^{1}_{ud}$.

        To see that these classes span, recall that $H^1$ is spanned
        by indicator functions of patterns, and that the acceptance
        domain of each pattern is a multi-strip whose boundaries are
        translates (in the vertical direction) by $\Z^2=\Z + \Z\alpha$
        of the various $B_j$'s. This means that the acceptance domain
        can be written as an integer linear combination of the
        $i_{B_j}$'s, plus (or minus) the indicators of some intervals
        whose lengths are in $\Z + \Z \alpha$. Since indicator
        functions of intervals whose lengths are in $\Z + \Z\alpha$
        have bounded discrepancy, these do not affect the class in
        $H^1_{ud}$.

        Since the $n-1$ classes of the $i_{B_j}$'s span $H^1_{ud}$,
        and since $H^1_{ud}$ is $(n-1)$-dimensional, these classes are
        linearly independent. The only way for a multi-slab to give
        the zero class is for the boundaries to cancel perfectly mod
        $\Z^2=\Z + \alpha \Z$, which is precisely the hypothesis of
        Oren's theorem.
\end{proof}

\section{Concluding Remarks}
The virtue of pattern equivariant cohomology is that it is not just
abstract nonsense---you get to see the cohomology work. In the above
proofs, the PE cohomology actually allows you to see what you're
counting. This feature (along with some simple observations about the
topology of the punctured torus, and some basic linear algebra) yields
Kesten's and Oren's theorems without any Diophantine analysis, thereby
demonstrating both the power and the intuitive appeal of PE
cohomology.

There is a large literature consisting of generalizations and reproofs
of Kesten's theorem (see \cite{BT, Ferenczi, GL, KL,Rauzy,Schmidt} for
a small sample), including cohomology-type proofs \cite{Helson} and
\cite{Halasz} using dynamical cocycles on $\T$. As far as we know,
this is the first proof of Kesten's theorem that deals directly with
the associated tilings.

We remark on one generalization of Kesten's theorem, the notion of a
{\it bounded remainder set} (BRS). This concept has been studied by a
number of authors, such as \cite{Ferenczi, GL, HK,
  Liardet,Rauzy}. Windows that are BRS's yield examples of
cut-and-project sets that are BD to lattices, as has been recently
reported in \cite{HK}.  With projections to spaces of dimension higher
than one, however, the notion of a bounded remainder set is too
strong---one can have windows that are not BRS's but still generate
cut-and-project sets that are BD to lattices.

Consider the following reformulation of Kesten's
theorem:\footnote{This formulation, related ideas, and similar
  results---especially in identifying the role of fundamental
  domains---were reported in \cite{scoop}.}
 \begin{quote}
 	{\it
		For an irrational strip $\mathrm{S}$, $S(\Z)$ is BD to a lattice if, and only if, $S$ is the closure of a fundamental domain of a cyclic subgroup of $\Z^2$.
	}
 \end{quote}
For higher dimensional spaces $V$, we have the following:
 \begin{quote}
 	{\it Let $S$ be an irrational slab\footnote{We will say that $S\subset \R^N$ is a slab if it is the closed convex hull of two distinct parallel codimension one hyperplanes. It is irrational if its boundary descends to a dense subset of $\T^N$.} in $\R^N$. If $S$ is the closure of  a fundamental domain for a cyclic subgroup of $\Z^N$, then the set $S(\Z)$ is BD to a lattice. 
	}
 \end{quote}
The proof of the above statement follows without much difficulty from the following observation: if $L$ is a cyclic subgroup of $\Z^N$, then $S(\Z)$ can be (modulo some points on the boundary) identified with $\Z^N/L$. But $\Z^N/L$ {\it is a lattice} in the quotient group $\R^N/L$! To show that $S(\Z)$ is BD to a lattice in $\R^N$ (not just in the quoitent) we appeal to simple variant of Proposition 2.1 from \cite{HKW} (where the group $\R^N$ is replaced with $\R^N$/L). This argument can also be made for subgroups of $\Z^N$ of higher rank, yielding a non-trivial family of cut-and-project sets that are BD to lattices. However, unlike in the 2-dimensional situation, the converse to the above result is false in general by Theorem 1.2 of \cite{HKW}.  
\section*{Acknowledgments}
We thank Jos\'e Aliste-Prieto, Natalie Priebe Frank, Nir Lev, Alan Haynes, Johannes Kellendonk and Barak Weiss
 for helpful discussions and for comments on earlier drafts of this 
paper. This work is partially supported by NSF grant DMS-1101326.

\nocite{*}
\bibliographystyle{plain}
\bibliography{slabs}	
\end{document}